\numberwithin{equation}{section}
\newtheorem{Theorem}{Theorem}[section]
\newtheorem{Proposition}[Theorem]{Proposition}
\newtheorem{cor}[Theorem]{Corollary}
\theoremstyle{remark}
\newtheorem{Definition}[Theorem]{Definition}
\newtheorem{Example}[Theorem]{Example}
\newtheorem{Remark}[Theorem]{Remark}
\begin{document}

\title{On (approximate) homological  notions of certain Banach algebras}

  \author[A. Sahami]{A. Sahami}
  \email{a.sahami@ilam.ac.ir}
  
  \address{Department of Mathematics
  	Faculty of Basic Sciences Ilam University P.O. Box 69315-516 Ilam,
  	Iran.}

\keywords{Approximate $\phi-$contratiblity, Approximate biprojectivity, $\phi-$biflatness, $\phi-$biprojectivity, Banach algebra.}

\subjclass[2010]{ 46M10, 46H20,  46H05.}

\maketitle
%-------------------------------------------------------------

%%%%%%%%%%%%%%%%%%%%%%%%%%%%%%%%%%%%%%%%%%%%%%%%%%%%%%%%%%%%%%%%%%%%%%%%%
\begin{abstract}
In this paper, we study the notion of  $\phi$-biflatness, $\phi$-biprojectivity,  approximate biprojectivity and Johnson pseudo-contractibility for a new class of Banach algebras. Using this class of Banach algebras we give some examples which are  approximately biprojective. Also some Banach algebras are given among matrix algebras which are never Johnson pseudo-contractible.
\end{abstract}
\section{Introduction and preliminaries}
Given a Banach algebra $A$, Kamyabi-Gol {\it et
	al.} in \cite{kam} defined a  new product on $A$ which is denoted by $\ast$. In fact $a\ast b=aeb,$ for each $a,b\in A,$ where $e$ is an element of the closed unit ball $\overline{B^{0}_{1}}$ of $A$. A Banach algebra $A$ equipped with $\ast$ as its product is denoted by $A_{e}$. They studied some properties like amenability and Arens regularity  of $A_{e}$. In \cite{khod} some homological properties of $A_{e}$ like biflatness, biprojectivity and $\phi-$amenability discussed.

New notions of $\phi-$amenability and approximate notions of homological Banach theory introduced and studied for Banach algebras see\cite{sam}, \cite{zhang} and \cite{kan}. In fact a Banach algebra 
a Banach algebra $A$ is called approximate $\phi-$contractible if there exists a net ($m_{\alpha})$ in $A$ such that $am_{\alpha}-\phi(a)m_{\alpha}\rightarrow 0,$ and $\phi(m_{\alpha})=1,$ for every $a\in A,$ where $\phi$ is a multiplicative linear functional on $A$. For more information see \cite{agha}.  Also a Banach algebra $A$ is called   
	approximate biprojective if there exists a net of
bounded linear maps from $A$ into $A\otimes_{p}A$, say
$(\rho_{\alpha})_{\alpha\in I}$, such that
\begin{enumerate}
	\item [(i)] $a\cdot \rho_{\alpha}(b)-\rho_{\alpha}(ab)\xrightarrow{||\cdot||} 0$,
	\item [(ii)] $\rho_{\alpha}(ba)-\rho_{\alpha}(b)\cdot a\xrightarrow{||\cdot||} 0$,
	\item [(iii)] $\pi_{A}\circ\rho_{\alpha}(a)-a\rightarrow
	0$,
\end{enumerate}
for every $a,b\in A$. In \cite{agha-aust} the structure of approximate biprojective Banach algebras and its nilpotent ideals and also the relation with other notions of amenability are discussed.  

We present some standard notations and definitions that we shall need
in this paper. Let $A$ be a Banach algebra. Throughout this work,
the character space of $A$ is denoted by $\Delta(A)$, that is, all
non-zero multiplicative linear functionals on $A$.
 For each
$\phi\in\Delta(A)$ there exists a unique extension $\tilde{\phi}$ to
$A^{**}$ which is defined $\tilde{\phi}(F)=F(\phi)$. It is easy to
see that $\tilde{\phi}\in\Delta(A^{**})$.
 The projective
tensor product
$A\otimes_{p}A$ is a Banach $A$-bimodule via the following actions
$$a\cdot(b\otimes c)=ab\otimes c,~~~(b\otimes c)\cdot a=b\otimes
ca\hspace{.5cm}(a, b, c\in A).$$ The product morphism $\pi_{A}:A\otimes_{p}A\rightarrow A$ is given by $\pi_{A}(a\otimes b)=ab,$ for every $a,b\in A.$ 

Let $A$ and $B$ be Banach algebras.
We denote by
$\phi\otimes \psi$  a  map defined by $\phi\otimes \psi(a\otimes
b)=\phi(a)\psi(b)$ for all $a\in A$ and $b\in B.$ It is easy to see
that $\phi\otimes \psi\in\Delta(A\otimes_{p}B).$

 Let $X$ and $Y$ be Banach $A-$bimodules. The map $T:X\rightarrow Y$ is called $A-$bimodule morphism, if 
$$T(a\cdot x)=a\cdot T(x),\quad T(x\cdot a)=T(x)\cdot a,\qquad (a\in A,x\in X).$$
Also a net of $(T_{\alpha})$ of maps from $X$ into $Y$ is called approximate $A-$bimodule morphism, if 
$$T_{\alpha}(a\cdot x)-a\cdot T_{\alpha}(x)\rightarrow 0,\quad T_{\alpha}(x\cdot a)-T_{\alpha}(x)\cdot a\rightarrow 0,\qquad (a\in A,x\in X).$$

The content of the paper is as follows. In section 2 we study $\phi-$homological properties of $A_{e}$ like $\phi-$biflatness and $\phi-$biprojectivity.  Approximate biprojectivity and Johnson pseudo-contractibility are two important notions of Banach homology theory, which we discuss for $A_{e}$ in section 3. We give some examples of matrix algebras to illustrate  the paper.
%------------------------------------------------------------------------------------------------------------------------------------------
%%%%%%%%%%%%%%%%%%%%%%%%%%%%%%%%%%%%%%%%%%%%%%%%%%%%%%%%%%%%%%%%%%%%%%%%%%%%%%%%%%%%%%%%%%%%%%%%%%%%%%%%%%%%%%%%%%%%%%%%%%%%%%%%%%%%%%%%%%%
%------------------------------------------------------------------------------------------------------------------------------------------
\section{$\phi-$homological properties of certain Banach algebras}
This section is devoted to the concepts of Banach homology related to a charater $\phi.$
\begin{Proposition}\cite[Proposition 2.3]{kam}\label{exis unit}
Let A be a Banach algebra and $e\in \overline{B^{0}_{1}}$ . Then $A_{e}$ is unital if and only if A is unital and $e$ is invertible.
\end{Proposition}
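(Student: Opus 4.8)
The plan is to prove both implications directly from the definition of the twisted product $a\ast b=aeb$, with essentially no analysis involved.

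For the (easy) ``if'' direction, I would assume that $A$ has an identity $1_A$ and that $e$ is invertible in $A$, and then simply check that $e^{-1}$ is the identity of $A_e$: for every $a\in A$ one has $a\ast e^{-1}=aee^{-1}=a$ and $e^{-1}\ast a=e^{-1}ea=a$. This is immediate and requires no estimates; the norm condition $e\in\overline{B^{0}_{1}}$ plays no role here beyond ensuring $A_e$ is a genuine Banach algebra.

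For the converse, suppose $A_e$ is unital with identity $u$, so that $aeu=a$ and $uea=a$ for all $a\in A$. The first equation says $eu$ is a right identity for the \emph{original} product of $A$, and the second says $ue$ is a left identity for it. I would then invoke the elementary ring-theoretic fact that a left identity and a right identity of an associative algebra must coincide: explicitly, $eu=(ue)(eu)=ue$, so this common element is a two-sided identity $1_A$ of $A$, proving $A$ is unital. The same two equations, now read as $eu=1_A=ue$, exhibit $u$ as a two-sided inverse of $e$, so $e$ is invertible.

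There is no real obstacle here: the statement is genuinely elementary and the argument is purely algebraic. The only point I would be careful to write out, rather than leave to the reader, is the step deducing a genuine unit of $A$ from the existence of a right identity $eu$ and a left identity $ue$, since that is the one place where a glib argument could overlook the need to show the two one-sided identities agree.
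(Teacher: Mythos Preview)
Your argument is correct and complete. Note, however, that the present paper does not actually prove this statement: it is quoted verbatim from \cite[Proposition~2.3]{kam} and no proof is given here, so there is no ``paper's own proof'' to compare your proposal against. Your elementary algebraic derivation---checking that $e^{-1}$ serves as the $\ast$-identity in one direction, and in the other direction extracting a two-sided identity of $A$ from the one-sided identities $eu$ and $ue$ via $eu=(ue)(eu)=ue$---is exactly the natural proof and would be an appropriate justification for readers who do not have \cite{kam} at hand.
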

\begin{Proposition}\cite[Proposition 2.4]{kam}
	Let A be a Banach algebra and $e\in \overline{B^{0}_{1}}$ .Then the followings hold:
	\begin{enumerate}
		\item If $\phi$ is a multiplicative linear functional on A, then  $\phi(e)\phi$ is a
		multiplicative linear functional on $A_{e}$.
		\item If $A_{e}$ is unital and  $\psi$ is a multiplicative linear functional on $A_{e}$, then
		$\phi(a)=\psi(e^{-1}a)$ is a multiplicative linear functional on A.
	\end{enumerate}
\end{Proposition}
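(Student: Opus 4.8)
The plan is to verify both statements by a direct computation with the twisted product $a \ast b = aeb$, using only multiplicativity of the given functional and, for the second part, the structural consequence of Proposition~\ref{exis unit}.

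For part (1), I would put $\chi := \phi(e)\phi$ and check multiplicativity on $A_{e}$ directly: for $a, b \in A$,
\[
\chi(a \ast b) = \phi(e)\,\phi(aeb) = \phi(e)\phi(a)\phi(e)\phi(b) = \bigl(\phi(e)\phi(a)\bigr)\bigl(\phi(e)\phi(b)\bigr) = \chi(a)\chi(b),
\]
the middle equality being exactly multiplicativity of $\phi$ on $A$. Linearity and boundedness of $\chi$ are inherited from $\phi$, so this step is immediate; the only point worth flagging is that $\chi$ vanishes identically precisely when $\phi(e)=0$, so if one wants a genuine element of $\Delta(A_{e})$ one records it under the hypothesis $\phi(e)\neq 0$.

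For part (2), the first move is to invoke Proposition~\ref{exis unit}: since $A_{e}$ is unital, $A$ is unital and $e$ is invertible, so $e^{-1}\in A$ and the formula $\phi(a)=\psi(e^{-1}a)$ is well defined; without this step the statement is not even well posed. Linearity of $\phi$ is clear and boundedness follows from $|\phi(a)|\le \|\psi\|\,\|e^{-1}\|\,\|a\|$. The heart of the argument is multiplicativity: for $a,b\in A$,
\[
\phi(a)\phi(b) = \psi(e^{-1}a)\,\psi(e^{-1}b) = \psi\bigl((e^{-1}a)\ast(e^{-1}b)\bigr) = \psi\bigl(e^{-1}a\,e\,e^{-1}b\bigr) = \psi(e^{-1}ab) = \phi(ab),
\]
where one uses multiplicativity of $\psi$ on $A_{e}$ and then cancels $e\,e^{-1}$. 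To see $\phi\neq 0$, pick $x$ with $\psi(x)\neq 0$ and observe $\phi(ex)=\psi(e^{-1}ex)=\psi(x)\neq 0$.

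I do not anticipate a real obstacle: both parts are short verifications. The only things demanding care are the bookkeeping in switching between the original product and $\ast$, and remembering to deploy Proposition~\ref{exis unit} in part (2) so that $e^{-1}$ is available.
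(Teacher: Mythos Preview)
Your argument is correct in both parts: the direct computation $\chi(a\ast b)=\phi(e)\phi(aeb)=\phi(e)\phi(a)\phi(e)\phi(b)=\chi(a)\chi(b)$ settles (1), and in (2) your cancellation $(e^{-1}a)\ast(e^{-1}b)=e^{-1}a\,e\,e^{-1}b=e^{-1}ab$ is exactly what is needed, with Proposition~\ref{exis unit} correctly invoked to make $e^{-1}$ available. Note, however, that the paper does not supply its own proof of this proposition at all: it is quoted verbatim from \cite[Proposition~2.4]{kam}, so there is nothing to compare against beyond observing that your verification is the standard one and matches what that reference contains.
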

\begin{Proposition}\cite[Proposition 2.3]{khod}\label{khod}
	Let A be a Banach algebra and  $e\in \overline{B^{0}_{1}}$. If $A_{e}$ is unital
	then $(A_{e})_{e^{-2}}$ = A, ( isometrically isomorphism ).
\end{Proposition}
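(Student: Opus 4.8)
The plan is to show that $(A_e)_{e^{-2}}$ is, as a normed space, literally $A$ with its original norm, and that the new multiplication one builds on it coincides with the original product of $A$; once this is checked, the identity map $A\to (A_e)_{e^{-2}}$ is the desired isometric isomorphism.

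First I would record the hypotheses that Proposition \ref{exis unit} gives us: since $A_e$ is unital, $A$ is unital and $e$ is invertible, so $e^{-1}$ and $e^{-2}$ are well-defined elements of $A$. Next, I would note that the Kamyabi-Gol construction changes only the multiplication and keeps the underlying Banach space: $A_e$ is $A$ as a Banach space, and in particular has the same norm; applying the construction once more, $(A_e)_{e^{-2}}$ again has the same underlying normed space as $A_e$, hence as $A$. (One should also remark that the construction makes sense here even though a priori one only knows $e\in\overline{B^{0}_{1}}$ rather than $e^{-2}\in\overline{B^{0}_{1}}$: the resulting norm will still be submultiplicative, because the product produced turns out to be the original product of $A$, for which $\|ab\|\le\|a\|\,\|b\|$.)

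The heart of the argument is a short computation. Write $\ast$ for the product of $A_e$, so $a\ast b = aeb$, and observe that $\ast$ is associative since $(aeb)ec=ae(bec)$. By definition, the product in $(A_e)_{e^{-2}}$ of two elements $a,b\in A$ is $a\ast e^{-2}\ast b$. Using associativity, $a\ast e^{-2} = a\,e\,e^{-2} = a e^{-1}$, and then $(a e^{-1})\ast b = (a e^{-1})\,e\,b = ab$. Hence the product of $(A_e)_{e^{-2}}$ is exactly the product of $A$.

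Combining these observations: the identity map on the common underlying set $A$ is a bijective linear map which is norm-preserving and, by the computation above, multiplicative from $A$ onto $(A_e)_{e^{-2}}$. Thus it is an isometric algebra isomorphism, which is the assertion. I do not expect a genuine obstacle here; the only points requiring a little care are the correct bracketing of $a\ast e^{-2}\ast b$ (legitimate by associativity of $\ast$) and keeping track of which $e$-factor is contributed by which layer of the construction.
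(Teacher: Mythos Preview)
Your argument is correct. The paper itself does not prove this proposition; it merely records it as a citation of \cite[Proposition 2.3]{khod}. The direct verification you give --- invoke Proposition \ref{exis unit} to obtain invertibility of $e$, note that the underlying normed space is unchanged at each step, and then compute $a\ast e^{-2}\ast b = (ae\,e^{-2})eb = ab$ --- is exactly the natural one and is essentially what appears in the cited source. Your side remark that $e^{-2}$ need not lie in the closed unit ball is well taken: the statement should be read with the construction applied formally to the element $e^{-2}$, and your observation that the resulting product coincides with the original product of $A$ (hence is automatically submultiplicative for the given norm) is precisely what legitimises this.
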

\begin{Proposition}\label{app left}
Suppose that $A$ is a Banach algebra and also suppose that $e\in \overline{B^{0}_{1}}$ and $\phi\in\Delta(A)$. Then the followings hold:
\begin{enumerate}
\item If $A$ is approximate $\phi-$contractible and $\phi(e)\neq 0$, then $A_{e} $ is approximately  $\psi$-contractible, where $\psi=\phi(e)\phi$.
\item If $A_{e}$ is unital and approximate $\psi-$contractible, then $A$ is approximate $\phi$-contractible, where $\phi(a)=\psi(e^{-1}a)$ for each $a\in A$.
\end{enumerate}
\end{Proposition}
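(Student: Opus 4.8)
The plan is to translate the approximate $\phi$-contractibility condition between the two products directly, using the relation $a \ast b = aeb$ and the formula $\psi = \phi(e)\phi$. For part (1), suppose $(m_\alpha)$ is a net in $A$ witnessing approximate $\phi$-contractibility of $A$, so that $am_\alpha - \phi(a)m_\alpha \to 0$ for every $a \in A$ and $\phi(m_\alpha) = 1$. The natural candidate for the witnessing net in $A_e$ is $n_\alpha := \lambda e m_\alpha$ for a suitable scalar $\lambda$ (depending on $\phi(e)$). First I would compute $\psi(n_\alpha) = \phi(e)\phi(\lambda e m_\alpha) = \lambda \phi(e)^3$, which forces the choice $\lambda = \phi(e)^{-3}$ so that $\psi(n_\alpha) = 1$; here $\phi(e) \neq 0$ is exactly what makes this legitimate. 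Then I would check $a \ast n_\alpha - \psi(a) n_\alpha = ae n_\alpha - \phi(e)\phi(a) n_\alpha = \lambda \big(ae(em_\alpha) - \phi(e)\phi(a) e m_\alpha\big)$; writing $b = ae$ and applying $b m_\alpha - \phi(b) m_\alpha \to 0$ together with continuity of left multiplication by $e$ and $\phi(ae) = \phi(a)\phi(e)$, this difference tends to $0$. That establishes (1).

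For part (2), $A_e$ is unital, so by Proposition 1.3 we have $(A_e)_{e^{-2}} = A$ isometrically isomorphically, and by Proposition 1.4(2) the functional $\phi(a) = \psi(e^{-1}a)$ lies in $\Delta(A)$. The cleanest route is to apply part (1) in reverse: treat $A_e$ as the base algebra, $e^{-2} \in A_e$ as the distinguished element (noting $\psi(e^{-2}) \neq 0$ since $\psi(e)\psi(e^{-1}) = \psi(e^{-1}e) = \psi(1) = 1$ forces $\psi(e) \neq 0$, hence $\psi(e^{-2}) = \psi(e)^{-2} \neq 0$ — here one must be careful whether $\psi$ is unital, but $\psi \in \Delta(A_e)$ on a unital algebra sends the identity to $1$), and observe that the functional part (1) produces on $(A_e)_{e^{-2}}$ is $\psi(e^{-2})\psi$ restricted appropriately, which should match $\phi$ up to the identification. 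Alternatively, and perhaps more transparently, I would argue directly: given a net $(n_\alpha)$ in $A_e$ with $a \ast n_\alpha - \psi(a) n_\alpha \to 0$ and $\psi(n_\alpha) = 1$, set $m_\alpha := \mu e^{-1} n_\alpha e^{-1}$ for a scalar $\mu$, compute $\phi(m_\alpha) = \mu\,\psi(e^{-1} \cdot e^{-1} n_\alpha e^{-1}) = \mu\, \psi(e^{-3} n_\alpha)$ and fix $\mu$ so this equals $1$, then verify $a m_\alpha - \phi(a) m_\alpha \to 0$ by expanding in terms of the $\ast$-product and the convergence hypothesis on $(n_\alpha)$.

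The main obstacle I anticipate is bookkeeping the scalar factors and the powers of $e$ (and $e^{-1}$) correctly, especially in part (2) where one works with $e^{-1}$ inside $A$ but the convergence hypothesis is phrased in terms of the $\ast$-product on $A_e$; one must consistently rewrite every ordinary product $xy$ in $A$ as $x \ast e^{-2} \ast \cdots$ or conversely, and keep track of which functional ($\phi$, $\psi$, $\phi(e)\phi$, or $\psi(e^{-2})\psi$) is being applied at each stage. A secondary subtlety is confirming $\psi(e) \neq 0$ in part (2): this uses that $A_e$ is unital with identity $e^{-2}$ (by Proposition 1.2, since the identity of $A_e$ is $e^{-1}$ of $A$... more precisely $A_e$ unital means $A$ unital and $e$ invertible by Proposition 1.5, and the identity of $A_e$ is $e^{-1}$), so $\psi(e^{-1}) = 1$ and hence $\psi(e) \neq 0$ whenever $\psi$ is multiplicative and nonzero. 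Once these scalar and invertibility points are pinned down, both implications reduce to a short application of continuity of multiplication and the defining limit, so I would not expect either direction to require more than a few lines.
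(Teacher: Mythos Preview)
Your strategy is sound, but you are working harder than necessary, and there is one arithmetic slip and one small gap to flag. In part~(1), $\psi(\lambda e m_\alpha)=\phi(e)\cdot\lambda\phi(e)\phi(m_\alpha)=\lambda\phi(e)^2$, not $\lambda\phi(e)^3$. More substantively, when you write ``$b=ae$'' you are actually facing $ae(em_\alpha)=ae^2 m_\alpha$, so you must either take $b=ae^2$ or invoke the hypothesis a second time (with $b=e$) to reduce $em_\alpha$ to $\phi(e)m_\alpha$; either patch works, but not in the single line you sketch. The paper sidesteps all of this by choosing simply $n_\alpha=m_\alpha/\phi(e)$, with no extra factor of $e$: then $\psi(n_\alpha)=\phi(e)\phi(m_\alpha)/\phi(e)=1$ at once, and $a\ast n_\alpha-\psi(a)n_\alpha=\phi(e)^{-1}\bigl((ae)m_\alpha-\phi(ae)m_\alpha\bigr)\to 0$ in a single application of the hypothesis.

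For part~(2) the paper again takes the minimal route. Writing $\psi(a)=\phi(ea)$, one has $a\ast m_\alpha-\psi(a)m_\alpha=aem_\alpha-\phi(ae)m_\alpha\to 0$; substituting $a\mapsto ae^{-1}$ gives $am_\alpha-\phi(a)m_\alpha\to 0$ directly in $A$. Since $1=\psi(m_\alpha)=\phi(e)\phi(m_\alpha)$ forces $\phi(m_\alpha)=\phi(e)^{-1}$, a fixed nonzero constant, the rescaled net $m_\alpha/\phi(m_\alpha)$ finishes the job. Your candidate $\mu e^{-1}n_\alpha e^{-1}$ can also be pushed through, but the right-hand $e^{-1}$ is superfluous (the definition involves only left multiplication), and your alternative $(A_e)_{e^{-2}}=A$ route carries the extra burden of checking that $e^{-2}$ lies in the closed unit ball of $A_e$, which is not automatic. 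In short: your plan is correct, but the paper's choice of witnessing nets---just scalar multiples of the given net, with a one-line substitution---eliminates most of the bookkeeping you anticipate.
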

\begin{proof}
Suppose that $A$ is approximately $\phi-$contractible. So there is a net $(m_{\alpha})$ in $A$ such that $$am_{\alpha}-\phi(a)m_{\alpha}\rightarrow 0,\quad \phi(m_{\alpha})=1,\qquad (a\in A).$$ Define $n_{\alpha}=\frac{m_{\alpha}}{\phi(e)}$. Thus we have 
\begin{equation*}
\begin{split}
a\ast n_{\alpha}-\psi(a)n_{\alpha}&=aen_{\alpha}-\psi(a)n_{\alpha}\\
&=ae\frac{m_{\alpha}}{\phi(e)}-\psi(a)\frac{m_{\alpha}}{\phi(e)}\\
&=ae\frac{m_{\alpha}}{\phi(e)}-\phi(ae)\frac{m_{\alpha}}{\phi(e)}+\phi(ae)\frac{m_{\alpha}}{\phi(e)}-\psi(a)\frac{m_{\alpha}}{\phi(e)}\rightarrow 0,\qquad (a\in A_{e}).
\end{split}
\end{equation*}
Also $$\psi(n_{\alpha})=\psi(\frac{m_{\alpha}}{\phi(e)})=\phi(e)\phi(\frac{m_{\alpha}}{\phi(e)})=\phi(m_{\alpha})=1.$$ It follows that $A_{e}$ is approximate  $\phi-$contractible. 

Suppose that $\phi(a)=\psi(e^{-1}a)$ and also suppose that $A_{e}$ is unital and approximately left $\psi-$contractible. It is easy to see that $\psi(a)=\phi(ea)$.  Let $(m_{\alpha})$ be a net in $A_{e} $ such that 
$$a\ast m_{\alpha}-\psi(a)m_{\alpha}\rightarrow 0,\quad \psi(m_{\alpha})=1,\qquad (a\in A_{e}).$$ Since
\begin{equation*}
\begin{split}
a\ast m_{\alpha}-\psi(a)m_{\alpha}&=aem_{\alpha}-\psi(a)m_{\alpha}\\
&=aem_{\alpha}-\phi(ea)m_{\alpha}\\
&=aem_{\alpha}-\phi(e)\phi(a)m_{\alpha}\\
&=aem_{\alpha}-\phi(a)\phi(e)m_{\alpha}\\
&=aem_{\alpha}-\phi(ae)m_{\alpha},
\end{split}
\end{equation*}
we have 
$$a\ast m_{\alpha}-\psi(a)m_{\alpha}=aem_{\alpha}-\phi(ae)m_{\alpha}\rightarrow 0$$ for each $a\in A.$ Replacing $a$ with $ae^{-1}$ we have $am_{\alpha}-\phi(a)m_{\alpha}\rightarrow 0.$ Regarding  $$1=\psi(m_{\alpha})=\phi(em_{\alpha})=\phi(e)\phi(m_{\alpha}),$$
we may suppose that $\phi(m_{\alpha})\neq 0,$ for each $\alpha.$ Now define $n_{\alpha}=\frac{m_{\alpha}}{\phi(m_{\alpha})}$. Clearly $\phi(n_{\alpha})=1.$ Also $$an_{\alpha}-\phi(a)n_{\alpha}=a\frac{m_{\alpha}}{\phi(m_{\alpha})}-\phi(a)\frac{m_{\alpha}}{\phi(m_{\alpha})}\rightarrow 0.$$ It finishes the proof.
\end{proof}
\begin{Example}
	Let  $A=\{\left(\begin{array}{ccc} a_{11}&a_{12}&a_{13}\\
	0&a_{22}&a_{23}\\
	0&0&a_{33}
	\end{array}
	\right)| a_{ij}\in \mathbb{C}\}$ and suppose that $e=\left(\begin{array}{ccc} \frac{1}{6}&\frac{1}{6}&\frac{1}{6}\\
	0&\frac{1}{6}&\frac{1}{6}\\
	0&0&\frac{1}{6}
	\end{array}
	\right)$. Clearly $A$ with matrix operations and $\ell^1$-norm is a Banach algebra. We know that  $e$ is invertible and by Proposition \ref{exis unit}, $A_{e}$ is unital. Define $\phi:A\rightarrow \mathbb{C}$ by $$\phi(\left(\begin{array}{ccc} a_{11}&a_{12}&a_{13}\\
	0&a_{22}&a_{23}\\
	0&0&a_{33}
	\end{array}
	\right))=a_{33}.$$ Clearly $\phi$ is a character(multiplicative linear functional) and $\phi(e)\neq 0$. Suppose conversely that $A_e$ is approximate $\psi-$contractible. By previous Proposition(2), $A$ becomes approximate $\phi-$contractible. On the other hand by the same arguments as in  the proof of \cite[Theorem 5.1]{sah taki}
 $A$ is not approximate $\phi-$contractible, which is a contradiction.
\end{Example}
Let $A$ be a Banach algebra and $\phi\in \Delta(A)$. $A$ is called
$\phi$-biprojective, if there exists a bounded $A$-bimodule morphism
$\rho:A\rightarrow A\otimes_{p}A$ such that $\phi \circ\pi_{A}
\circ\rho=\phi$. Also $A$ is called $\phi$-biflat if there exists a
bounded $A$-bimodule morphism $\rho:A\rightarrow (A\otimes_{p}
A)^{**}$ such that $\tilde{\phi} \circ \pi_{A}^{**} \circ\rho=\phi$. For more information about $\phi-$biflatness and $\phi-$biprojectivity, the reader refers to \cite{sah phi-biflat} and \cite{sah col}.

\begin{Theorem}
	Let $A$ be a Banach algebra and $\phi\in\Delta(A)$. Suppose that $e\in \overline{B^{0}_{1}}$ and $\phi(e)\neq 0.$
	If $A$ is $\phi$-biprojective, then $A_{e}$ is $\psi=\phi(e)\phi$-biprojective.
\end{Theorem}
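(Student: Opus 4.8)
The plan is to reuse the very map that witnesses the $\phi$-biprojectivity of $A$, after rescaling by $\phi(e)^{-1}$, and to observe that the $A$-bimodule morphism property transfers to an $A_{e}$-bimodule morphism property almost for free: the left (respectively right) action of $A_{e}$ on $A_{e}\otimes_{p}A_{e}$ is just the original $A$-action composed with the map $a\mapsto ae$ (respectively $a\mapsto ea$), and the underlying Banach spaces of $A$ and $A_{e}$, as well as of $A\otimes_{p}A$ and $A_{e}\otimes_{p}A_{e}$, coincide isometrically (the projective tensor norm depends only on the Banach space structure).

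Concretely, let $\rho:A\rightarrow A\otimes_{p}A$ be a bounded $A$-bimodule morphism with $\phi\circ\pi_{A}\circ\rho=\phi$. Keeping the same underlying linear map, set $\tilde{\rho}:=\phi(e)^{-1}\rho$, now viewed as a map $A_{e}\rightarrow A_{e}\otimes_{p}A_{e}$; it is clearly bounded. First I would check that $\tilde{\rho}$ is an $A_{e}$-bimodule morphism. Writing $\rho(b)=\sum_{k}p_{k}\otimes q_{k}$, the left $A_{e}$-action on the tensor product gives $a\cdot\tilde{\rho}(b)=\phi(e)^{-1}\sum_{k}(aep_{k})\otimes q_{k}$, while $\tilde{\rho}(a\ast b)=\phi(e)^{-1}\rho(aeb)=\phi(e)^{-1}(ae)\cdot\rho(b)=\phi(e)^{-1}\sum_{k}(aep_{k})\otimes q_{k}$, where I used that $\rho$ is a left $A$-module morphism together with $a\ast b=(ae)b$. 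The right-module identity is handled symmetrically from $b\ast a=b(ea)$ and the right $A$-module property of $\rho$.

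Next I would verify the defining identity $\psi\circ\pi_{A_{e}}\circ\tilde{\rho}=\psi$. Since $\pi_{A_{e}}(p\otimes q)=p\ast q=peq$ and $\phi$ is multiplicative, for $a\in A_{e}$ with $\rho(a)=\sum_{k}p_{k}\otimes q_{k}$ one gets $\psi\bigl(\pi_{A_{e}}(\tilde{\rho}(a))\bigr)=\phi(e)\cdot\phi(e)^{-1}\sum_{k}\phi(p_{k})\phi(e)\phi(q_{k})=\phi(e)\sum_{k}\phi(p_{k})\phi(q_{k})=\phi(e)\,\phi\bigl(\pi_{A}(\rho(a))\bigr)=\phi(e)\phi(a)=\psi(a)$. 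Hence $A_{e}$ is $\psi$-biprojective.

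I do not anticipate a genuine obstacle; the only point requiring care is the scalar bookkeeping, namely that the extra factor $\phi(e)$ contributed by the middle $e$ in $\pi_{A_{e}}$ is exactly cancelled by the $\phi(e)^{-1}$ in the definition of $\tilde{\rho}$, and that the argument never needs $e$ to be invertible, so the hypothesis is only $\phi(e)\neq 0$ rather than unitality of $A_{e}$.
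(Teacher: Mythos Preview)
Your proposal is correct and follows essentially the same approach as the paper: both define $\tilde{\rho}=\phi(e)^{-1}\rho$, verify that the $A$-bimodule property of $\rho$ automatically yields the $A_{e}$-bimodule property of $\tilde{\rho}$ via $a\ast b=aeb$, and then check $\psi\circ\pi_{A_{e}}\circ\tilde{\rho}=\psi$ using the multiplicativity of $\phi$ to extract the extra factor $\phi(e)$ coming from $\pi_{A_{e}}(p\otimes q)=peq$. Your write-up is in fact slightly more explicit about the scalar cancellation than the paper's.
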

\begin{proof}
Since 	$A$ is $\phi$-biprojective, there exists  a bounded $A-$bimodule morphism $\rho:A\rightarrow A\otimes_{p}A$ such that $\phi\circ\pi_{A}\circ\rho =\phi.$ Define $\tilde{\rho}=\frac{1}{\phi(e)}\rho$. We show that $\tilde{\rho}$ is a bounded $A_e$-bimodule morphism. To see this, consider
\begin{equation*}
\begin{split}
\tilde{\rho}(a\ast b)=\frac{1}{\phi(e)}\rho(a\ast b)=\frac{1}{\phi(e)}\rho(a e b)&=ae\frac{1}{\phi(e)}\rho( b)\\
&=a\ast \frac{1}{\phi(e)}\rho( b)\\
&=a\ast \tilde{\rho}( b),\qquad (a,b\in A_{e}).
\end{split}
\end{equation*}
Also
\begin{equation*}
\begin{split}
\tilde{\rho}(a\ast b)=\frac{1}{\phi(e)}\rho(a\ast b)=\frac{1}{\phi(e)}\rho(a e b)&=\frac{1}{\phi(e)}\rho( a)be\\
&= \frac{1}{\phi(e)}\rho( a)\ast b\\
&= \tilde{\rho}( a)\ast b,\qquad (a,b\in A_{e}).
\end{split}
\end{equation*}
On the other hand, since $$\psi\circ \pi_{A_{e}}\circ \tilde{\rho}=\phi(e)\phi\circ\pi_{A}\circ\rho,$$
we have 
\begin{equation*}
\begin{split}
\psi\circ \pi_{A_{e}}\circ\tilde{\rho}(a)=\phi(e)\phi\circ\pi_{A}\circ\rho(a)=\phi(e)\phi(a)=\psi(a),\quad (a\in A_{e}).
\end{split}
\end{equation*}
So $A_{e}$ is $\psi-$biprojective.
	\end{proof}
Using the similar arguments as in the proof of the previous theorem, we have the following corollary:
\begin{cor}
	Let $A$ be a Banach algebra and $\phi\in\Delta(A)$. Suppose that $e\in \overline{B^{0}_{1}}$ and $\phi(e)\neq 0.$
	If $A$ is $\phi$-biflat, then $A_{e}$ is $\psi=\phi(e)\phi$-biflat.
\end{cor}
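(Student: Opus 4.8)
The plan is to repeat the proof of the preceding theorem with $A\otimes_{p}A$ replaced by $(A\otimes_{p}A)^{**}$ and $\pi_{A}$ by $\pi_{A}^{**}$. Since $A$ is $\phi$-biflat, fix a bounded $A$-bimodule morphism $\rho:A\rightarrow(A\otimes_{p}A)^{**}$ with $\widetilde{\phi}\circ\pi_{A}^{**}\circ\rho=\phi$, and put $\widetilde{\rho}=\frac{1}{\phi(e)}\rho$, now viewed as a map $A_{e}\rightarrow(A\otimes_{p}A)^{**}$. This makes sense because $A_{e}\otimes_{p}A_{e}=A\otimes_{p}A$ as Banach spaces; the $A_{e}$-bimodule structure carried by $(A\otimes_{p}A)^{**}$ is the one coming from that of $A_{e}\otimes_{p}A_{e}$, and at the level of $A\otimes_{p}A$ it is just the $A$-module structure precomposed with multiplication by the fixed element $e$, namely $a\ast\xi=(ae)\cdot\xi$ and $\xi\ast a=\xi\cdot(ea)$; this relation passes to the biduals.

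First I would verify that $\widetilde{\rho}$ is an $A_{e}$-bimodule morphism. This is word for word the computation in the theorem: since $\rho$ is an $A$-bimodule morphism, $\rho(aeb)=(ae)\cdot\rho(b)=\rho(a)\cdot(eb)$, and dividing by $\phi(e)$ and rewriting the two sides via the identities above gives $\widetilde{\rho}(a\ast b)=a\ast\widetilde{\rho}(b)=\widetilde{\rho}(a)\ast b$. No new issue appears here, precisely because the $A_{e}$-modules in play are the old $A$-modules twisted by multiplication by $e$.

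The one point requiring a little care is the equality $\widetilde{\psi}\circ\pi_{A_{e}}^{**}\circ\widetilde{\rho}=\psi$. I would base it on the factorization $\pi_{A_{e}}=\pi_{A}\circ\theta$, where $\theta:A\otimes_{p}A\rightarrow A\otimes_{p}A$ is the bounded operator with $\theta(a\otimes b)=a\otimes eb$ (so that $\pi_{A}(\theta(a\otimes b))=aeb=a\ast b$); hence $\pi_{A_{e}}^{**}=\pi_{A}^{**}\circ\theta^{**}$ by functoriality of the second adjoint. On $A\otimes_{p}A$ one checks directly that $\phi\circ\pi_{A}\circ\theta=\phi(e)\,(\phi\circ\pi_{A})$, and together with the elementary identities $\widetilde{\psi}=\phi(e)\widetilde{\phi}$ on $A^{**}$ and $\widetilde{\phi}\circ\pi_{A}^{**}=(\phi\circ\pi_{A})^{**}$ this yields $\widetilde{\psi}\circ\pi_{A_{e}}^{**}\circ\widetilde{\rho}=\phi(e)\,\widetilde{\phi}\circ\pi_{A}^{**}\circ\rho=\phi(e)\phi=\psi$. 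Therefore $A_{e}$ is $\psi$-biflat.

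The main obstacle, minor as it is, is staying honest with the second-adjoint bookkeeping: unlike in the $\phi$-biprojective case one cannot evaluate $\rho$ on an elementary tensor and compute with characters, since $\rho$ now takes values in $(A\otimes_{p}A)^{**}$. Routing the argument through the factorization $\pi_{A_{e}}=\pi_{A}\circ\theta$ and functoriality of $(-)^{**}$ reduces everything to the identity $\phi\circ\pi_{A}\circ\theta=\phi(e)(\phi\circ\pi_{A})$, which is already visible at the level of $A\otimes_{p}A$.
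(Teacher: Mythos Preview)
Your proof is correct and follows exactly the approach the paper intends: the paper's own proof is simply the remark that the argument of the preceding theorem carries over, and your proposal is a careful execution of that, defining $\widetilde{\rho}=\frac{1}{\phi(e)}\rho$ and verifying the $A_{e}$-bimodule morphism property and the character identity. Your explicit handling of the second-adjoint bookkeeping via the factorization $\pi_{A_{e}}=\pi_{A}\circ\theta$ is a bit more detailed than the paper bothers to be, but it is precisely what ``similar arguments'' must mean here.
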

Let $A$ be a Banach algebra and $\phi\in \Delta(A)$.  $A$ is called $\phi-$amenable if there exists a bounded net $(m_{\alpha})$ in $A$ such that $am_{\alpha}-\phi(a)m_{\alpha}\rightarrow 0$ and $\phi(m_{\alpha})=1,$ for every $a\in A,$ see \cite{kan}.
\begin{cor}
	Let $A$ be a Banach algebra and $\phi\in\Delta(A)$. Suppose that $e\in \overline{B^{0}_{1}}$ and $\phi(e)\neq 0.$
	If $A$ is $\phi$-biflat and $A$ has a left approximate identity, then $A_{e}$ is approximate  $\psi=\phi(e)\phi$-contractible.
\end{cor}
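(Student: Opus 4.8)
The plan is to isolate the role of $e$ by means of Proposition~\ref{app left}(1) and reduce the assertion to one about $A$ alone. Concretely, I would first prove the intermediate fact: \emph{if $A$ is $\phi$-biflat and has a left approximate identity, then $A$ is approximate $\phi$-contractible.} Granting this, since $\phi(e)\neq 0$, Proposition~\ref{app left}(1) applied with $\psi=\phi(e)\phi$ immediately yields that $A_e$ is approximate $\psi$-contractible, which is exactly the claim. So the whole problem collapses to the intermediate fact, which is the $\phi$-homological counterpart of the classical statement that a biflat Banach algebra with a bounded approximate identity is amenable (and which, in essence, should be available in \cite{sah phi-biflat}); I sketch below how I would prove it directly.

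For the intermediate fact, start from a bounded $A$-bimodule morphism $\rho\colon A\to(A\otimes_{p}A)^{**}$ with $\tilde{\phi}\circ\pi_{A}^{**}\circ\rho=\phi$. The key device is to compose $\rho$ \emph{not} with $\pi_A$ but with the second transpose of the slice map $\operatorname{id}\otimes\phi\colon A\otimes_{p}A\to A$, $a\otimes b\mapsto\phi(b)a$: this map is a left $A$-module morphism that turns the right module action into multiplication by the scalar $\phi(a)$, and $\phi\circ(\operatorname{id}\otimes\phi)=\phi\otimes\phi=\phi\circ\pi_A$ on $A\otimes_{p}A$. Hence, writing $\Psi:=(\operatorname{id}\otimes\phi)^{**}\circ\rho\colon A\to A^{**}$ and passing these identities to second duals (routine with Arens-type extensions), one obtains for all $a,b\in A$
\[
\Psi(ab)=a\cdot\Psi(b),\qquad \Psi(ab)=\phi(b)\,\Psi(a),\qquad \tilde{\phi}\circ\Psi=\phi .
\]

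Now feed in a left approximate identity $(e_{\beta})$ of $A$. From $\Psi(e_{\beta}a)=e_{\beta}\cdot\Psi(a)\to\Psi(a)$ and $\tilde{\phi}(e_{\beta}\cdot\Psi(a))=\phi(e_{\beta})\phi(a)$, choosing $a$ with $\phi(a)\neq 0$ gives $\phi(e_{\beta})\to 1$; passing to a tail of the net I may assume $\phi(e_{\beta})\neq 0$ and set $f_{\beta}:=\phi(e_{\beta})^{-1}e_{\beta}$, still a left approximate identity, with $\phi(f_{\beta})=1$. Put $m_{\beta}:=\Psi(f_{\beta})\in A^{**}$. Then $\tilde{\phi}(m_{\beta})=\phi(f_{\beta})=1$, while the identities for $\Psi$ give $a\cdot m_{\beta}=\Psi(af_{\beta})=\phi(f_{\beta})\Psi(a)=\Psi(a)$ and $\phi(a)m_{\beta}=\Psi(f_{\beta}a)$, whence $a\cdot m_{\beta}-\phi(a)m_{\beta}=\Psi(a-f_{\beta}a)\to 0$ in norm — note this uses only that $\Psi$ is a fixed bounded map and $f_{\beta}a\to a$, so no boundedness of $(f_{\beta})$ is needed. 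Finally I would push this net of approximate $\phi$-means from $A^{**}$ down to $A$ by the usual Hahn--Banach/weak-$*$ argument: for a finite $F\subseteq A$ and $\varepsilon>0$, if no $x\in A$ satisfied $|\phi(x)-1|<\varepsilon$ and $\|ax-\phi(a)x\|<\varepsilon$ for every $a\in F$, a separating functional would, after taking weak-$*$ limits of $x$ towards $m_{\beta}$ for $\beta$ large enough, contradict $\tilde{\phi}(m_{\beta})=1$ together with $\|a\cdot m_{\beta}-\phi(a)m_{\beta}\|\to 0$. This produces a net in $A$ realising approximate $\phi$-contractibility (after rescaling so that $\phi$ equals $1$ on it), and Proposition~\ref{app left}(1) then closes the argument.

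The step I expect to be the real obstacle is precisely this last descent from $A^{**}$ to $A$: one must organise the Hahn--Banach argument over the combined index of the $e_{\beta}$'s together with finite subsets of $A$ and tolerances, keeping $\tilde{\phi}(m_{\beta})=1$ exact and the defects $\|a\cdot m_{\beta}-\phi(a)m_{\beta}\|$ uniformly small at the same time — routine but the one place requiring care. Everything preceding it is direct computation. One could, of course, shortcut the whole thing by quoting the intermediate fact ``$\phi$-biflat $+$ left approximate identity $\Rightarrow$ approximate $\phi$-contractible'' from \cite{sah phi-biflat} and then only invoking Proposition~\ref{app left}(1).
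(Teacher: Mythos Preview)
Your proposal is correct and follows the same two-step structure as the paper: first establish that $A$ itself is approximate $\phi$-contractible, then apply Proposition~\ref{app left}(1) to transfer this to $A_{e}$. The paper takes precisely the shortcut you mention at the end --- it quotes \cite[Theorem~2.2]{sah taki} to get $\phi$-amenability of $A$ from $\phi$-biflatness plus a left approximate identity (which trivially implies approximate $\phi$-contractibility), rather than spelling out the slice-map/descent argument you sketch.
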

\begin{proof}
Since  $A$ is $\phi$-biflat and $A$ has a left approximate identity, by similar arguments as in the proof of \cite[Theorem 2.2]{sah taki} $A$ is  $\phi$-amenable. It is easy to see that   $\phi$-amenability of $A$ implies that $A$ is approximate $\phi-$contractible. Applying Proposition \ref{app left}, $A_{e}$ becomes approximate $\psi-$contractible.
\end{proof}
Let $A$ b a Banach algebra and $\phi\in\Delta(A).$ Then
 $A$ is called   
approximate left $\phi$-biprojective if there exists a net of
bounded linear maps from $A$ into $A\otimes_{p}A$, say
$(\rho_{\alpha})_{\alpha\in I}$, such that
\begin{enumerate}
	\item [(i)] $\rho_{\alpha}(ab)-\phi(a)\rho_{\alpha}(b)\xrightarrow{||\cdot||} 0$,
	\item [(ii)] $\rho_{\alpha}(ba)-\rho_{\alpha}(b)\cdot a\xrightarrow{||\cdot||} 0$,
	\item [(iii)] $\pi_{A}\circ\rho_{\alpha}(a)-a\rightarrow
	0$,
\end{enumerate}
for every $a,b\in A$, see \cite{sah glas}.
\begin{Theorem}\label{app phi proj}
	Let $A$ be a Banach algebra and $\phi\in\Delta(A)$. Suppose that $e\in \overline{B^{0}_{1}}$ and $\phi(e)\neq 0.$
	If $A$ is approximate left $\phi$-biprojective, then $A_{e}$ is approximate left $\psi=\phi(e)\phi$-biprojective.
\end{Theorem}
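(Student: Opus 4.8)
The plan is to imitate the proof of the preceding theorem on $\psi$-biprojectivity, replacing the single $A$-bimodule morphism by the net $(\rho_{\alpha})$ and rescaling each member by $1/\phi(e)$. So let $(\rho_{\alpha})_{\alpha\in I}$ be a net of bounded linear maps from $A$ into $A\otimes_{p}A$ witnessing approximate left $\phi$-biprojectivity of $A$, and set $\tilde\rho_{\alpha}=\frac{1}{\phi(e)}\rho_{\alpha}$, now regarded as maps from $A_{e}$ into $A_{e}\otimes_{p}A_{e}$ (the underlying Banach space being unchanged). I would then verify the three defining conditions for $(\tilde\rho_{\alpha})$.

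Conditions (i) and (ii) are routine and follow the template already used for $\psi$-biprojectivity. For (i): $\tilde\rho_{\alpha}(a\ast b)=\frac{1}{\phi(e)}\rho_{\alpha}(aeb)=\frac{1}{\phi(e)}\rho_{\alpha}((ae)b)$, and since $\rho_{\alpha}((ae)b)-\phi(ae)\rho_{\alpha}(b)\rightarrow 0$ with $\phi(ae)=\phi(a)\phi(e)$, this expression is asymptotic to $\phi(a)\rho_{\alpha}(b)=\psi(a)\tilde\rho_{\alpha}(b)$. For (ii): write $b\ast a=b(ea)$; then condition (ii) for $A$ gives $\rho_{\alpha}(b(ea))-\rho_{\alpha}(b)\cdot(ea)\rightarrow 0$, and after division by $\phi(e)$ the term $\rho_{\alpha}(b)\cdot(ea)$ (computed with the $A$-action) is precisely $\tilde\rho_{\alpha}(b)\cdot a$ computed with the $A_{e}$-action $(x\otimes y)\cdot a=x\otimes (yea)$ on $A_{e}\otimes_{p}A_{e}$; hence $\tilde\rho_{\alpha}(b\ast a)-\tilde\rho_{\alpha}(b)\cdot a\rightarrow 0$.

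The main obstacle is condition (iii), namely $\pi_{A_{e}}\circ\tilde\rho_{\alpha}(a)-a\rightarrow 0$, because $\pi_{A_{e}}(x\otimes y)=xey$ carries an $e$ between the two legs, whereas approximate left $\phi$-biprojectivity of $A$ only controls $\pi_{A}$. The point that unblocks it is that the hypotheses on $A$ force the pointwise identity $ab=\phi(a)b$ for all $a,b\in A$: applying the bounded map $\pi_{A}$ to condition (i) gives $\pi_{A}\rho_{\alpha}(ab)-\phi(a)\pi_{A}\rho_{\alpha}(b)\rightarrow 0$, while condition (iii) gives $\pi_{A}\rho_{\alpha}(ab)\rightarrow ab$ and $\pi_{A}\rho_{\alpha}(b)\rightarrow b$, so uniqueness of limits yields $ab-\phi(a)b=0$. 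In particular $ey=\phi(e)y$ for all $y\in A$, so on an elementary tensor $xey=\phi(e)\,xy$, and since $\pi_{A_{e}}$ and $\phi(e)\pi_{A}$ are bounded linear maps agreeing on the span of elementary tensors they coincide, i.e.\ $\pi_{A_{e}}(z)=\phi(e)\pi_{A}(z)$ for every $z\in A\otimes_{p}A$. Therefore $\pi_{A_{e}}\circ\tilde\rho_{\alpha}(a)=\frac{1}{\phi(e)}\pi_{A_{e}}(\rho_{\alpha}(a))=\pi_{A}(\rho_{\alpha}(a))\rightarrow a$, which is (iii). Since $\psi=\phi(e)\phi\in\Delta(A_{e})$, the net $(\tilde\rho_{\alpha})$ then witnesses approximate left $\psi$-biprojectivity of $A_{e}$. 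I expect (iii) to be the only step requiring genuine thought; (i), (ii) and the fact that $\psi$ is a character are mechanical.
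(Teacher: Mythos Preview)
Your treatment of conditions (i) and (ii) matches the paper's: rescale by $1/\phi(e)$ and reduce $a\ast b=aeb$ to the corresponding $A$-conditions. The divergence is entirely at (iii), and it comes from a typo in the paper's displayed definition of approximate left $\phi$-biprojectivity. Condition (iii) is printed as $\pi_{A}\circ\rho_{\alpha}(a)-a\to 0$, but that is the clause for approximate \emph{biprojectivity} (compare the definition given in the introduction), not for the $\phi$-version. The intended condition---and the one the paper explicitly invokes in its own proof of this theorem---is $\phi\circ\pi_{A}\circ\rho_{\alpha}(a)-\phi(a)\to 0$, in line with the (non-approximate) definition of $\phi$-biprojectivity just above and with the source paper on approximate left $\phi$-biprojectivity.

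Your argument in fact exposes the typo: your derivation that (i) together with the printed (iii) forces $ab=\phi(a)b$ for all $a,b\in A$ shows that, as stated, the notion would collapse the product of $A$ to a rank-one multiplication, which cannot be what is meant. With the corrected (iii) your trick no longer applies (you cannot conclude $ey=\phi(e)y$), and the target to verify becomes $\psi\circ\pi_{A_{e}}\circ\tilde\rho_{\alpha}(a)-\psi(a)\to 0$. This is immediate once you note $\psi\circ\pi_{A_{e}}=\phi(e)^{2}\,\phi\circ\pi_{A}$ on $A\otimes_{p}A$ (since $\psi(xey)=\phi(e)\phi(xey)=\phi(e)^{2}\phi(xy)$); dividing by $\phi(e)$ gives $\psi\circ\pi_{A_{e}}\circ\tilde\rho_{\alpha}=\phi(e)\,\phi\circ\pi_{A}\circ\rho_{\alpha}$, and the right-hand side tends to $\phi(e)\phi(a)=\psi(a)$. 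That is exactly the paper's route, and with it condition (iii) is no harder than (i) and (ii).
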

\begin{proof}
	Since 	$A$ is approximate left $\phi$-biprojective, there exists  a net of bounded linear maps $(\rho_{\alpha})$ from $A$ into $A\otimes_{p}A$ such that
	$$\rho_{\alpha}(ab)-\phi(a)\rho_{\alpha}(b)\rightarrow 0,\quad \rho_{\alpha}(ab)-\rho_{\alpha}(a)\cdot b\rightarrow 0,\quad \phi\circ\pi_{A}\circ\rho(a)-\phi(a)\rightarrow 0.$$ Define $\tilde{\rho}_{\alpha}=\frac{1}{\phi(e)}\rho_{\alpha}$. We show that there exists  a net of bounded linear maps $(\tilde{\rho}_{\alpha})$ from $A_{e}$ in to $A_{e}\otimes_{p}A_{e}$ such that
	$$\tilde{\rho}_{\alpha}(a\ast b)-\psi(a)\tilde{\rho}_{\alpha}(b)\rightarrow 0,\quad \tilde{\rho}_{\alpha}(a\ast b)-\tilde{\rho}_{\alpha}(a)\ast b\rightarrow 0,\quad \psi\circ\pi_{A}\circ\tilde{\rho}(a)-\psi(a)\rightarrow 0.$$To see this, consider
	\begin{equation*}
	\begin{split}
	\tilde{\rho}_{\alpha}(a\ast b)-\psi(a)\tilde{\rho}_{\alpha}(b)
	&=\tilde{\rho}_{\alpha}(ae b)-\phi(a)\phi(e)\tilde{\rho}_{\alpha}(b)\\
	&=\frac{1}{\phi(e)}(\rho_{\alpha}(ae b)-\phi(a)\phi(e)\rho_{\alpha}(b))\\
	&=\frac{1}{\phi(e)}(\rho_{\alpha}(ae b)-\phi(ae)\rho_{\alpha}( b)+\phi(ae)\rho_{\alpha}( b)-\phi(a)\phi(e)\rho_{\alpha}(b))\\
	&\rightarrow 0
	\end{split}
	\end{equation*}
	Also
	\begin{equation*}
	\begin{split}
	\tilde{\rho}_{\alpha}(a\ast b)-\tilde{\rho}_{\alpha}(a)\ast b=\frac{1}{\phi(e)}\rho_{\alpha}(ae b)-\frac{1}{\phi(e)}\rho_{\alpha}(a)eb\rightarrow 0
		\end{split}
	\end{equation*}
	On the other hand, since $$\psi\circ \pi_{A_{e}}\circ \tilde{\rho}_{\alpha}=\phi(e)\phi\circ\pi_{A}\circ\rho_{\alpha},$$
	we have 
	\begin{equation*}
	\begin{split}
	\psi\circ \pi_{A_{e}}\circ\tilde{\rho}_{\alpha}(a)-\psi(a)=\phi(e)\phi\circ\pi_{A}\circ\rho_{\alpha}(a)-\phi(e)\phi(a)\rightarrow \phi(e)\phi(a)- \phi(e)\phi(a)=0,\quad (a\in A_{e}).
	\end{split}
	\end{equation*}
	So $A_{e}$ is approximate left $\psi-$biprojective.
\end{proof}
\begin{Remark}\label{remark1}
	Let $A$ and $B $ be  Banach algebras and $e_{A}\in  \overline{B^{0}_{1}}^{A}$ and $e_{B}\in  \overline{B^{0}_{1}}^{B}$. Then there exist two sequences $(x_{n})$ and $(y_{n})$ in the unit ball $A$ and the unit ball B such that $x_{n}\rightarrow e_{A} $ and $y_{n}\rightarrow e_{B},$ respectively. Since $$||x_{n}\otimes y_{n}-e_{A}\otimes e_{B}||\leq ||x_{n}\otimes y_{n}-e_{A}\otimes y_{n}||+||e_{A}\otimes y_{n}-e_{A}\otimes e_{B}||\rightarrow 0,$$
	we have $e_{A}\otimes e_{B}\in  \overline{B^{0}_{1}}^{A\otimes_{p}B}$.  Define $T:A_{e_{A}}\otimes_{p}B_{e_{B}}\rightarrow {A\otimes_{p}B}_{e_{A}\otimes e_{B}}$ by 
	$T(a\otimes b)=a\otimes b$ for every $a\in A$ and $b\in B.$ It is easy to see that $T$ is an isometric algebra isomorphism. Also $T$ is a bounded ${A\otimes_{p}B}_{e_{A}\otimes e_{B}}-$bimodule morphism.
\end{Remark}
\begin{Proposition}
	Let $A$ and $B $ be  Banach algebras and $e_{A}\in  \overline{B^{0}_{1}}^{A}$ and $e_{B}\in  \overline{B^{0}_{1}}^{B}$. Suppose that $\phi_{A}\in\Delta(A)$ and $\phi_{B}\in \Delta(B)$ which $\phi_{A}(e_{A})\neq 0$ and $\phi_{B}(e_{B})\neq 0.$ If $A$ and $B$ are $\phi_{A}-$biprojective and $\phi_{B}-$biprojective, repectively, then 	${A\otimes_{p}B}_{e_{A}\otimes e_{B}}$ is $\phi_{A}(e_A)\phi_{A}\otimes \phi_{B}(e_B)\phi_{B}-$biprojective.
\end{Proposition}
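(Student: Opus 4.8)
The plan is to reduce the statement to the single-algebra theorem on $\psi$-biprojectivity of $A_{e}$ proved above, after first checking that $\phi$-biprojectivity passes to projective tensor products in the obvious way. So my first step is the auxiliary claim: if $A$ is $\phi_{A}$-biprojective and $B$ is $\phi_{B}$-biprojective, then $A\otimes_{p}B$ is $(\phi_{A}\otimes\phi_{B})$-biprojective. Choose bounded bimodule morphisms $\rho_{A}:A\rightarrow A\otimes_{p}A$ and $\rho_{B}:B\rightarrow B\otimes_{p}B$ with $\phi_{A}\circ\pi_{A}\circ\rho_{A}=\phi_{A}$ and $\phi_{B}\circ\pi_{B}\circ\rho_{B}=\phi_{B}$. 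Let $\Theta:(A\otimes_{p}A)\otimes_{p}(B\otimes_{p}B)\rightarrow(A\otimes_{p}B)\otimes_{p}(A\otimes_{p}B)$ be the canonical isometric isomorphism determined on elementary tensors by $(a_{1}\otimes a_{2})\otimes(b_{1}\otimes b_{2})\mapsto(a_{1}\otimes b_{1})\otimes(a_{2}\otimes b_{2})$, and set $\rho=\Theta\circ(\rho_{A}\otimes\rho_{B}):A\otimes_{p}B\rightarrow(A\otimes_{p}B)\otimes_{p}(A\otimes_{p}B)$. Giving $(A\otimes_{p}A)\otimes_{p}(B\otimes_{p}B)$ its natural $A\otimes_{p}B$-bimodule structure, both $\rho_{A}\otimes\rho_{B}$ and $\Theta$ are $A\otimes_{p}B$-bimodule morphisms, hence so is $\rho$; moreover $\pi_{A\otimes_{p}B}\circ\Theta=\pi_{A}\otimes\pi_{B}$, so that $(\phi_{A}\otimes\phi_{B})\circ\pi_{A\otimes_{p}B}\circ\rho=(\phi_{A}\circ\pi_{A}\circ\rho_{A})\otimes(\phi_{B}\circ\pi_{B}\circ\rho_{B})=\phi_{A}\otimes\phi_{B}$. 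This proves the auxiliary claim.

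Next, by Remark \ref{remark1} we have $e_{A}\otimes e_{B}\in\overline{B^{0}_{1}}^{A\otimes_{p}B}$, and $(\phi_{A}\otimes\phi_{B})(e_{A}\otimes e_{B})=\phi_{A}(e_{A})\phi_{B}(e_{B})\neq 0$. Applying the theorem on $\psi$-biprojectivity of $A_{e}$ above, with $A$, $\phi$, $e$ replaced by $A\otimes_{p}B$, $\phi_{A}\otimes\phi_{B}$, $e_{A}\otimes e_{B}$ (its hypothesis being supplied by the auxiliary claim), we conclude that $(A\otimes_{p}B)_{e_{A}\otimes e_{B}}$ is $\psi$-biprojective, where $\psi=\bigl[(\phi_{A}\otimes\phi_{B})(e_{A}\otimes e_{B})\bigr](\phi_{A}\otimes\phi_{B})=\phi_{A}(e_{A})\phi_{B}(e_{B})(\phi_{A}\otimes\phi_{B})$.

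Finally it remains to identify $\psi$ with the functional in the statement. For $a\in A$ and $b\in B$ one has $(\phi_{A}(e_{A})\phi_{A}\otimes\phi_{B}(e_{B})\phi_{B})(a\otimes b)=\phi_{A}(e_{A})\phi_{B}(e_{B})\,\phi_{A}(a)\phi_{B}(b)=\psi(a\otimes b)$, so the two bounded linear functionals agree on elementary tensors and hence on all of $A\otimes_{p}B$. This completes the argument.

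The only genuine work is the bookkeeping in the first step: checking on elementary tensors that $\Theta$ intertwines the two $A\otimes_{p}B$-bimodule structures and that $\pi_{A\otimes_{p}B}\circ\Theta=\pi_{A}\otimes\pi_{B}$; there is no conceptual obstacle. A variant staying closer to Remark \ref{remark1} would instead apply the theorem above to $A$ and to $B$ separately, obtaining $\phi_{A}(e_{A})\phi_{A}$-biprojectivity of $A_{e_{A}}$ and $\phi_{B}(e_{B})\phi_{B}$-biprojectivity of $B_{e_{B}}$, then apply the auxiliary tensor claim to $A_{e_{A}}\otimes_{p}B_{e_{B}}$, and finally transport along the isometric algebra isomorphism $T$ of Remark \ref{remark1}; this requires exactly the same tensor bookkeeping.
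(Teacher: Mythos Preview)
Your argument is correct. The only difference from the paper is the order in which the two reductions are performed. You first prove the auxiliary tensor claim for the original algebras $A$ and $B$ and then apply the single-algebra theorem once to $A\otimes_{p}B$ with the element $e_{A}\otimes e_{B}$; the paper does the opposite, first applying the single-algebra theorem to each factor to obtain $\phi_{A}(e_{A})\phi_{A}$-biprojectivity of $A_{e_{A}}$ and $\phi_{B}(e_{B})\phi_{B}$-biprojectivity of $B_{e_{B}}$, and only then tensoring and transporting along the isomorphism $T$ of Remark~\ref{remark1}. Your ``variant'' in the last paragraph is precisely the paper's route. Both orders rely on the same two ingredients (the shuffle isomorphism $\Theta$ and the identity $\pi_{A\otimes_{p}B}\circ\Theta=\pi_{A}\otimes\pi_{B}$), so there is no real gain either way; your order is arguably a touch cleaner because it invokes the single-algebra theorem only once and avoids carrying the map $T$ through the computation.
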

\begin{proof}
Since $A$ and $B$ are $\phi_{A}-$biprojective and $\phi_{B}-$biprojective, repectively, then by Theorem \ref{app phi proj}, $A_{e}$ and $B_{e} $ are $\phi_{A}(e_A)\phi_{A}-$biprojective and $\phi_{B}(e_B)\phi_{B}-$biprojective, respectively. So there exist a $A_{e_{A}}-$bimodule morphism
$\rho_{0}:A_{e_{A}}\rightarrow A_{e_{A}}\otimes_{p}A_{e_{A}}$ and a $B_{e_{B}}$-bimodule morphism $\rho_{1}:B_{e_B}\rightarrow
B_{e_B}\otimes_{p}B_{e_B}$  such that $\phi_{A}(e_A)\phi_{A} \circ\pi_{A}\circ\rho_{0}=\phi_{A}(e_A)\phi_{A}$
and $\phi_{B}(e_B)\phi_{B} \circ\pi_{B} \circ\rho_{1}=\phi_{B}(e_B)\phi_{B}$. 
Define
$\theta:(A_{e_{A}}\otimes_{p}A_{e_{A}})\otimes_{p}(B_{e_{B}}\otimes_{p}B_{e_{B}})\rightarrow(A_{e_{A}}\otimes_{p}B_{e_{B}})\otimes_{p}(A_{e_{A}}\otimes_{p}B_{e_{B}})
$ by $$(a_{1}\otimes a_{2})\otimes (b_{1}\otimes b_{2})\mapsto
(a_{1}\otimes b_{1})\otimes (a_{2}\otimes b_{2}),$$ where $a_{1},
a_{2}\in A$ and $b_{1}, b_{2}\in B$. Clearly $\theta $ is an isometric algebra isomorphism.  Set $\rho=(T\otimes T)\circ \theta
\circ(\rho_{0}\otimes\rho_{1})\circ T^{-1}$,  where $T$ is the map defined as in Remark \ref{remark1}. We know that  $\rho$ is a bounded linear map from $ A\otimes_{p}B_{e_{A}\otimes e_{B}}$ into $ (A\otimes_{p}B_{e_{A}\otimes e_{B}})\otimes_{p}(A\otimes_{p}B_{e_{A}\otimes e_{B}})$. Consider
$$\pi_{A\otimes_{p}B_{e_{A}\otimes e_{B}}}\circ\theta(a_{1}\otimes a_{2}\otimes b_{1}\otimes
b_{2})=\pi_{A\otimes_{p}B_{e_{A}\otimes e_{B}}}(a_{1}\otimes b_{1}\otimes a_{2}\otimes
b_{2})=\pi_{A_{e_{A}}}(a_{1}\otimes a_{2})\otimes\pi_{B_{e_{B}}}(b_{1}\otimes b_{2}),$$
then clearly one can show that
$\pi_{A\otimes_{p}B_{e_{A}\otimes e_{B}}}\circ\theta=\pi_{A_{e_{A}}}\otimes\pi_{B_{e_{B}}}$. Hence,
$$\pi_{A\otimes_{p}B_{e_{A}\otimes e_{B}}}\circ\theta(\rho_{0}(a)\otimes\rho_{1}(b))=\pi_{A_{e_{A}}}\circ\rho_{0}(a)\otimes\pi_{B_{e_{B}}}\circ\rho_{1}(b)$$
and it is easy to see that $$\phi_{A}(e_A)\phi_{A}\otimes \phi_{B}(e_B)\phi_{B}
\circ\pi_{A\otimes_{p}B}\circ\theta(\rho_{0}\otimes\rho_{1})(a\otimes
b)=\phi_{A}(e_A)\phi_{A}\otimes \phi_{B}(e_B)\phi_{B}(a\otimes b),$$  the proof is complete.	
	\end{proof}
%------------------------------------------------------------------------------------------------------------------------------------------
%%%%%%%%%%%%%%%%%%%%%%%%%%%%%%%%%%%%%%%%%%%%%%%%%%%%%%%%%%%%%%%%%%%%%%%%%%%%%%%%%%%%%%%%%%%%%%%%%%%%%%%%%%%%%%%%%%%%%%%%%%%%%%%%%%%%%%%%%%%
%------------------------------------------------------------------------------------------------------------------------------------------
\section{Approximate homological properties of certain Banach algebras}
In this section we investigate approximate biprojectivity and Johnson pseudo-contractibility of $A_{e}$.
\begin{Theorem}\label{Approximate homological}
Suppose that $A$ is a Banach algebra and also suppose that $e\in \overline{B^{0}_{1}}$. Then the followings hold:
\begin{enumerate}
\item If $A$ is approximately biprojective and $A_{e}$ is unital then $A_{e}$ is approximately biprojective.
\item If $A_{e}$ is unital and approximately biprojective, then $A$ is approximately biprojective.
\end{enumerate}
\end{Theorem}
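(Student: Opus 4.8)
The plan is to push an approximating net from one algebra to the other by post-composing with a fixed bounded operator acting on one leg of the tensor product. Since $A_e$ is unital, Proposition \ref{exis unit} gives that $A$ is unital and $e$ is invertible; write $L_c\colon A\to A$ for left multiplication by $c\in A$, which is a bounded operator. The structural observation driving everything is that, on the common underlying Banach spaces $A$ and $A\otimes_{p}A=A_e\otimes_{p}A_e$, the data of $A_e$ and of $A$ differ only by an $e$ or an $e^{-1}$: for $x,y,a\in A$ one has $a\cdot_{\ast}(x\otimes y)=(aex)\otimes y$, $(x\otimes y)\cdot_{\ast}a=x\otimes(yea)$ and $\pi_{A_e}(x\otimes y)=xey$, where $\cdot_{\ast}$ denotes the $A_e$-bimodule action.

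For (1), let $(\rho_\alpha)$ be a net of bounded linear maps $A\to A\otimes_{p}A$ witnessing approximate biprojectivity of $A$. I would set $\tilde\rho_\alpha=(\operatorname{id}\otimes L_{e^{-1}})\circ\rho_\alpha$, again a net of bounded linear maps, now viewed as maps $A_e\to A_e\otimes_{p}A_e$. Then $\pi_{A_e}\circ\tilde\rho_\alpha(a)=\pi_A\circ\rho_\alpha(a)\to a$, the inserted $e^{-1}$ being absorbed by the $e$ sitting inside $\pi_{A_e}$. For the two approximate module identities I would substitute $ae$, respectively $ea$, for $a$ in the defining conditions of $\rho_\alpha$: a short computation gives
\begin{equation*}
a\cdot_{\ast}\tilde\rho_\alpha(b)-\tilde\rho_\alpha(a\ast b)=(\operatorname{id}\otimes L_{e^{-1}})\big((ae)\cdot\rho_\alpha(b)-\rho_\alpha((ae)b)\big)
\end{equation*}
and
\begin{equation*}
\tilde\rho_\alpha(b\ast a)-\tilde\rho_\alpha(b)\cdot_{\ast}a=(\operatorname{id}\otimes L_{e^{-1}})\big(\rho_\alpha(b(ea))-\rho_\alpha(b)\cdot(ea)\big),
\end{equation*}
both of which tend to $0$ because $\operatorname{id}\otimes L_{e^{-1}}$ is bounded and the bracketed terms tend to $0$ by the two conditions for $\rho_\alpha$.

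For (2), one option is to quote Proposition \ref{khod}: $A=(A_e)_{e^{-2}}$ isometrically (and the bimodule structure of $(A_e)_{e^{-2}}$ over itself agrees with that of $A$, since $x\ast e^{-2}\ast y=xy$), while $e^{-2}$ is invertible in $A_e$ and $(A_e)_{e^{-2}}=A$ is unital; since the proof of part (1) uses only invertibility of the perturbing element, applying it to the algebra $A_e$ and the element $e^{-2}$ yields that $A$ is approximately biprojective. To sidestep any discussion of whether $e^{-2}$ lies in the closed unit ball, I would instead argue directly in the mirror image: starting from a net $(\sigma_\alpha)$ of bounded linear maps $A_e\to A_e\otimes_{p}A_e$ witnessing approximate biprojectivity of $A_e$, put $\hat\rho_\alpha=(\operatorname{id}\otimes L_{e})\circ\sigma_\alpha$, viewed as maps $A\to A\otimes_{p}A$; using the identities $a\cdot\sigma_\alpha(b)=(ae^{-1})\cdot_{\ast}\sigma_\alpha(b)$, $\sigma_\alpha(b)\cdot a=\sigma_\alpha(b)\cdot_{\ast}(e^{-1}a)$ together with $(ae^{-1})\ast b=ab$ and $b\ast(e^{-1}a)=ba$, the three approximate conditions for $\sigma_\alpha$ become those for $\hat\rho_\alpha$ after applying the bounded operator $\operatorname{id}\otimes L_{e}$, and $\pi_A\circ\hat\rho_\alpha(a)=\pi_{A_e}\circ\sigma_\alpha(a)\to a$.

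The only genuine difficulty I anticipate is identifying the correct one-sided twist: because $\pi_{A_e}$ carries an $e$ in the middle, a symmetric correction such as $L_{e^{-1}}\otimes L_{e^{-1}}$ fails condition (iii), and one is forced to the asymmetric choice $\operatorname{id}\otimes L_{e^{-1}}$ (respectively $\operatorname{id}\otimes L_{e}$). Once the map is chosen, every verification reduces to applying a fixed bounded operator to a net that already tends to $0$, so no further estimates are required.
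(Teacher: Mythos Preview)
Your proof is correct and follows essentially the same route as the paper: post-compose the given net with a one-legged multiplication by $e^{\pm 1}$ to convert between $\pi_A$ and $\pi_{A_e}$ (the paper uses $a\otimes b\mapsto ae^{-1}\otimes b$ whereas you use $a\otimes b\mapsto a\otimes e^{-1}b$, a purely cosmetic difference), and for (2) invoke $A=(A_e)_{e^{-2}}$ via Proposition~\ref{khod} and reapply (1). Your explicit observation that the argument of (1) uses only invertibility of $e$, together with the direct mirror construction, is in fact more careful than the paper, which applies (1) to $e^{-2}$ without addressing the unit-ball hypothesis.
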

\begin{proof}
To show (1), suppose that $A$ is approximately biprojective and $A_{e}$ is unital. It follows that there is an approximately $A-$bimodule morphism $(\rho_{\alpha}) $ from $A$ into $A\otimes_{p}A$ such that $\pi_{A}\circ\rho_{\alpha}(a)-a\rightarrow 0$ for each $a\in A.$ Note that 
\begin{equation*}
\begin{split}
\rho_{\alpha}(a\ast b)-a\ast \rho_{\alpha}(b)&=\rho_{\alpha}(aeb)-a\ast \rho_{\alpha}(b)\\
&=\rho_{\alpha}(aeb)-ae\rho_{\alpha}(b)+ae\rho_{\alpha}(b)-a\ast \rho_{\alpha}(b)\rightarrow 0,
\end{split}
\end{equation*}
and 
\begin{equation*}
\begin{split}
\rho_{\alpha}(a\ast b)- \rho_{\alpha}(a)\ast b&=\rho_{\alpha}(aeb)-\rho_{\alpha}(a)\ast b\\
&=\rho_{\alpha}(aeb)-\rho_{\alpha}(a)eb+\rho_{\alpha}(a)eb- \rho_{\alpha}(a)\ast b\rightarrow 0,
\end{split}
\end{equation*}
for each $a\in A_{e}.$ It implies that $(\rho_{\alpha})$ from $A_{e}$ into $A_{e}\otimes_{p}A_{e}$ is an approximately $A_{e}$-bimodule morphism. Define $T:A_{e}\otimes_{p}A_{e}\rightarrow A_{e}\otimes_{p}A_{e}$ by $T(a\otimes b)=ae^{-1}\otimes b.$ Note that using Proposition \ref{exis unit},  the definition of $T$ makes sense. It is easy to see that $$T(a\ast (c\otimes d))=a\ast T(c\otimes d),\quad T((c\otimes d)\ast a)=T(c\otimes d)\ast a,\qquad (a,c,d\in A).$$ Set $\tilde{\rho}_{\alpha}=T\circ \rho_{\alpha}$. Using direct calculations we can see that  $$\pi_{A_{e}}\circ \tilde{\rho}_{\alpha}=\pi_{A}\circ\rho_{\alpha}$$
It follows that 
$$\pi_{A_{e}}\circ \tilde{\rho}_{\alpha}-a=\pi_{A}\circ\rho_{\alpha}-a\rightarrow 0,\qquad (a\in A_{e}).$$
Thus $A_{e}$ is approximately biprojective.

To show (2), suppose that $A_{e}$ is unital and approximately biprojective. By Proposition \ref{khod}, we know that $A=(A_{e})_{e^{-2}}$. Now applying (1) it is easy to see that $A$ is approximately biprojective.
\end{proof}
A Banach algebra $A$ is called biprojective if there exists a bounded $A-$bimodule morphism $\rho:A\rightarrow A\otimes_{p}A$ such that $\pi_{A}\circ\rho(a)=a$ for each $a\in A,$ see  \cite{run}.
\begin{Example}
	Let  $A=\{\left(\begin{array}{ccc} a_{11}&a_{12}&a_{13}\\
	a_{21}&a_{22}&a_{23}\\
	a_{31}&a_{32}&a_{33}
	\end{array}
	\right)| a_{ij}\in \mathbb{C}\}$. With the matrix operations  and $\ell^1$-norm, $A$ becomes a Banach algebra.  Suppose that $e=\left(\begin{array}{ccc} \frac{1}{4}&\frac{1}{4}&0\\
	0&\frac{1}{4}&0\\
	0&0&\frac{1}{4}
	\end{array}
	\right)$. Clearly $e$ is invertible and $A$ is unital. So by Proposition \ref{exis unit}, $A_{e}$ is unital. It is well-known that $A$ is biprojective, see \cite{run}. So $A$ is approximately biprojective. Applying previous theorem $A_{e}$ becomes approximately biprojective.
\end{Example}
\begin{Definition}
We say that a Banach algebra $A$ has approximate (F)-property(or $A$ is AFP) if  there is an approximate $A-$bimodule morphsim $(\rho_{\alpha})$ from $A$ into $(A\otimes_{p}A)^{**}$ such that $\pi^{**}_{A}\circ\rho_{\alpha}(a)-a\rightarrow 0,$ for each $a\in A.$
\end{Definition}
 For the motivation of this definition see \cite{askar}.
 \begin{Proposition}
 	If $A$ is AFP and $A_{e}$ is unital, then $A_{e}$ is approximately biprojective.
 \end{Proposition}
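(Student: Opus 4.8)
The plan is to reduce the statement to the already-established Theorem~\ref{Approximate homological}(1) by proving the following self-contained fact: \emph{a unital Banach algebra which is AFP is approximately biprojective.} Since $A_e$ is unital, Proposition~\ref{exis unit} forces $A$ to be unital, say with identity $1$; hence it suffices to show that $A$ itself is approximately biprojective, and then Theorem~\ref{Approximate homological}(1) immediately yields the approximate biprojectivity of $A_e$.

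First I would use the unit to convert the AFP-net into an ``approximate diagonal inside the bidual''. Let $(\rho_\alpha)\colon A\to (A\otimes_p A)^{**}$ witness that $A$ is AFP. Put $\xi_\alpha=\rho_\alpha(1)$. Evaluating the two approximate-bimodule-morphism conditions at $x=1$ gives $\rho_\alpha(a)-a\cdot\xi_\alpha\to 0$ and $\rho_\alpha(a)-\xi_\alpha\cdot a\to 0$, so subtracting we get $a\cdot\xi_\alpha-\xi_\alpha\cdot a\to 0$ for every $a\in A$; and the defining property of AFP at $a=1$ gives $\pi_A^{**}(\xi_\alpha)-1\to 0$ in $A^{**}$.

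The heart of the argument, and the step I expect to be the only real obstacle, is to push $(\xi_\alpha)$ back down from $(A\otimes_p A)^{**}$ into $A\otimes_p A$. I claim there is a net $(M_\gamma)$ in $A\otimes_p A$ with $a\cdot M_\gamma-M_\gamma\cdot a\to 0$ and $\pi_A(M_\gamma)\to 1$. Fix a finite set $F\subseteq A$ and $\varepsilon>0$, and choose $\alpha$ with $\|a\cdot\xi_\alpha-\xi_\alpha\cdot a\|<\varepsilon/2$ for all $a\in F$ and $\|\pi_A^{**}(\xi_\alpha)-1\|<\varepsilon/2$. Introduce the bounded operator $S\colon A\otimes_p A\to Z$, where $Z=\bigl(\bigoplus_{a\in F}(A\otimes_p A)\bigr)\oplus A$ is endowed with the maximum norm, defined by $S(w)=\bigl((a\cdot w-w\cdot a)_{a\in F},\ \pi_A(w)\bigr)$, and let $z_0=\bigl((0)_{a\in F},\ 1\bigr)\in Z$. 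Since the biadjoint of left (resp. right) multiplication by $a$ on $A\otimes_p A$ is exactly the left (resp. right) module action of $a$ on the bidual, and since the biadjoint of $\pi_A$ is $\pi_A^{**}$, one has $S^{**}(\xi_\alpha)=\bigl((a\cdot\xi_\alpha-\xi_\alpha\cdot a)_{a\in F},\ \pi_A^{**}(\xi_\alpha)\bigr)$, whence $\|S^{**}(\xi_\alpha)-z_0\|<\varepsilon/2$. Now a Hahn--Banach argument closes the gap: for any $g\in Z^{*}$ with $\|g\|\le 1$ that annihilates the range of $S$ we have $S^{*}g=0$, so $\langle S^{**}(\xi_\alpha),g\rangle=\langle\xi_\alpha,S^{*}g\rangle=0$, and therefore $|g(z_0)|=|g(z_0)-\langle S^{**}(\xi_\alpha),g\rangle|\le\|z_0-S^{**}(\xi_\alpha)\|<\varepsilon/2$; by the duality formula for the distance to a subspace this says $\mathrm{dist}_Z\bigl(z_0,\ S(A\otimes_p A)\bigr)<\varepsilon$. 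Hence there exists $M_{F,\varepsilon}\in A\otimes_p A$ with $\|a\cdot M_{F,\varepsilon}-M_{F,\varepsilon}\cdot a\|<\varepsilon$ for all $a\in F$ and $\|\pi_A(M_{F,\varepsilon})-1\|<\varepsilon$; indexing by $(F,\varepsilon)$ produces the required net $(M_\gamma)$. I want to stress the delicate point: nothing here needs $(\rho_\alpha)$, the $\xi_\alpha$, or the $M_\gamma$ to be norm-bounded -- and this is essential, since a bounded such net would be a bounded approximate virtual diagonal and one would be covertly proving amenability instead of mere approximate biprojectivity.

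Finally, define $\rho_\gamma\colon A\to A\otimes_p A$ by $\rho_\gamma(a)=a\cdot M_\gamma$. These are bounded linear maps, and directly $a\cdot\rho_\gamma(b)-\rho_\gamma(ab)=0$, $\rho_\gamma(ba)-\rho_\gamma(b)\cdot a=b\cdot(a\cdot M_\gamma-M_\gamma\cdot a)\to 0$, and $\pi_A\circ\rho_\gamma(a)-a=a\bigl(\pi_A(M_\gamma)-1\bigr)\to 0$; thus $A$ is approximately biprojective. Applying Theorem~\ref{Approximate homological}(1) with this and the hypothesis that $A_e$ is unital, we conclude that $A_e$ is approximately biprojective. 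Apart from the descent in the third paragraph -- identifying the relevant biadjoints with the bidual module actions and running the Hahn--Banach / unboundedness argument -- every computation is routine.
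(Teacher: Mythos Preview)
Your proof is correct and follows essentially the same underlying idea as the paper's: evaluate the AFP net at an identity to obtain an ``approximate diagonal'' in the bidual, descend to the predual by a convexity argument, and then manufacture the approximating bimodule maps by one-sided multiplication against this diagonal. The organisational difference is that the paper carries out the entire construction inside $A_e$ (using the map $T(a\otimes b)=ae^{-1}\otimes b$ and its biadjoint to repair $\pi_{A_e}$, and the identity of $A_e$ to produce the diagonal), whereas you first isolate the self-contained lemma \emph{unital $+$ AFP $\Rightarrow$ approximately biprojective} for $A$ and then invoke the already-proved Theorem~\ref{Approximate homological}(1) as a black box; this factorisation is cleaner and avoids repeating the $T$-construction. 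Your descent step (Hahn--Banach distance to the range of the bundled operator $S$) is also a little more direct than the paper's Goldstine/weak-$*$ approximation followed by Mazur's lemma, but the two arguments are equivalent in content.
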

\begin{proof}
Since $A$ is AFP, there exists an approximate $A-$bimodule morphsim $(\rho_{\alpha})$ from $A$ into $(A\otimes_{p}A)^{**}$ such that $\pi^{**}_{A}\circ\rho_{\alpha}(a)-a\rightarrow 0,$ for each $a\in A.$ It is easy to see that $(\rho_{\alpha})$  is an approximate $A_{e}-$bimodule morphsim from $A_{e}$ into $(A_{e}\otimes_{p}A_{e})^{**}$ such that $\pi^{**}_{A_{e}}\circ\rho_{\alpha}(a)-a\rightarrow 0,$ for each $a\in A_{e}.$  Let $T:A_{e}\otimes_{p}A_{e}\rightarrow A_{e}\otimes_{p}A_{e}$ be the same map as in the proof of Theorem \ref{Approximate homological}. Clearly $T$ is $A_{e}$-module morphism, so is $T^{**}$. Similar to the proof of Theorem \ref{Approximate homological}, for the net $(T^{**}\circ \rho_{\alpha})$ is an approximate $A_{e}-$bimodule morphism from $A_{e}$ into $(A_{e}\otimes_{p}A_{e})^{**}$ such that
$$\pi^{**}_{A_{e}}\circ T^{**}\circ\rho_{\alpha}(a)-a=\pi^{**}_{A}\circ \rho_{\alpha}(a)-a\rightarrow 0,\quad (a\in A).$$ We denote the identity of $A_{e}$
with $a_{0}$ and define $m_{\alpha}=\rho_{\alpha}(a_{0})$. Clearly $(m_{\alpha})$ is a net in $(A_{e}\otimes_{p}A_{e})^{**}$ which satisfies $$a\ast m_{\alpha}-m_{\alpha}\ast a\rightarrow 0,\qquad \pi_{A_{e}}^{**}(m_{\alpha})a-a\rightarrow 0,\quad (a\in A_{e}).$$
 Take $\epsilon>0$ and arbitrary finite subsets $F\subseteq A_{e}$,
$\Lambda\subseteq (A_{e}\otimes_{p}A_{e})^{*}$ and $\Gamma \subseteq A_{e}^*$.
Then we have
$$||a\ast
m_{\alpha}-m_{\alpha}\ast a||<\epsilon,\quad
||\pi_{A_{e}}^{**}(m_{\alpha})a-a||<\epsilon,\quad (a\in
F).$$ It is well-known that for each $\alpha$, there exists a net
$(n^{\alpha}_{\beta})_{\beta}$ in $A_{e}\otimes_{p}A_{e}$ such that
$n^{\alpha}_{\beta}\xrightarrow{w^{*}}m_{\alpha}$. Since
$\pi^{**}_{A_{e}}$ is a $w^{*}$-continuous map, we have
$$\pi_{A_{e}}(n^{\alpha}_{\beta})=\pi_{A_{e}}^{**}(n^{\alpha}_{\beta})\xrightarrow{w^{*}}\pi_{A}^{**}(m_{\alpha}).$$
Thus we have $$|a\ast
n^{\alpha}_{\beta}(f)-a\ast m_{\alpha}(f)|<\frac{\epsilon}{K_{0}},\quad
|
n^{\alpha}_{\beta}\ast a(f)-m_{\alpha}\ast(f)|<\frac{\epsilon}{K_{0}}$$
and
$$|\pi_{A_{e}}(n^{\alpha}_{\beta})(g)-\pi_{A_{e}}^{**}(m_{\alpha})(g)|<\frac{\epsilon}{K_{1}},$$
for each $ a\in F$, $ f\in\Lambda$ and $g\in A^*$, where $K_{0}=\sup\{||f||:f\in
\Lambda\}$ and $K_{1}=\sup\{||g||:g\in
\Gamma\}$. Since $a\ast m_{\alpha}-m_{\alpha}\ast a\rightarrow 0$ and  $\pi_{A_{e}}^{**}(m_{\alpha})\ast a-a\rightarrow 0,$ we can find
$\beta=\beta(F,\Lambda,\Gamma,\epsilon)$ such that
$$|a\ast n^{\alpha}_{\beta}(f)-n^{\alpha}_{\beta}\ast a(f)|<c\frac{\epsilon}{K_{0}},\quad |\pi_{A_{e}}(n^{\alpha}_{\beta})\ast a(g)-a(g)|<\frac{\epsilon}{K_{1}},\quad (a\in F,f\in \Lambda,g\in \Gamma)$$
for some $c\in \mathbb{R^{+}}$. Using Mazur's lemma, we have a net
$(n_{(F,\Lambda,\Gamma,\epsilon)})$ in $A_{e}\otimes_{p}A_{e}$ such that $$||a\ast
n_{(F,\Lambda,\Gamma,\epsilon)}-n_{(F,\Lambda,\Gamma,\epsilon)}\ast a||\rightarrow
0,\quad || \pi_{A}(n_{(F,\Lambda,\Gamma,\epsilon)})\ast a-a||\rightarrow
0,\quad (a\in F).$$ Define $\rho_{(F,\Lambda,\Gamma,\epsilon)}:A_{e}\rightarrow
A_{e}\otimes_{p}A_{e}$ by $\rho_{(F,\Lambda,\Gamma,\epsilon)}(a)=a\ast
n_{(F,\Lambda,\Gamma,\epsilon)} $ for each $a\in A_{e}.$ It is clear that
$\rho_{(F,\Lambda,\Gamma,\epsilon)}(a\ast b)=a\ast
\rho_{(F,\Lambda,\Gamma,\epsilon)}(b)$ for each $a,b\in A.$ Also
\begin{equation}
\begin{split}
||\rho_{(F,\Lambda,\epsilon)}(a\ast b)-\rho_{(F,\Lambda,\Gamma,\epsilon)}(a)\ast b||&=||ab\ast
n_{(F,\Lambda,\Gamma,\epsilon)}-a\ast( n_{(F,\Lambda,\Gamma,\epsilon)}\ast b)||\\
&\leq ||a||||b\ast n_{(F,\Lambda,\Gamma,\epsilon)}-
n_{(F,\Lambda,\Gamma,\epsilon)}\ast b||\rightarrow 0,
\end{split}
\end{equation}
for each $a,b\in A_{e}.$ Also
\begin{equation}
\begin{split}
||\pi_{A_{e}}\circ\rho_{(F,\Lambda,\Gamma,\epsilon)}(a)-a||&=||\pi_{A_{e}}(a\ast
n_{(F,\Lambda,\Gamma,\epsilon)})-a||\\&=||\pi_{A_{e}}(a\ast
n_{(F,\Lambda,\Gamma,\epsilon)})-\pi_{A_{e}}(
n_{(F,\Lambda,\Gamma,\epsilon)}\ast a)+\pi_{A_{e}}(
n_{(F,\Lambda,\Gamma,\epsilon)}\ast a)-a||\\
&\leq ||\pi_{A_{e}}(a\ast
n_{(F,\Lambda,\Gamma,\epsilon)})-\pi_{A_{e}}(
n_{(F,\Lambda,\Gamma,\epsilon)}\ast a)||+||\pi_{A_{e}}(
n_{(F,\Lambda,\Gamma,\epsilon)})\ast a-a||\\
&\rightarrow 0,
\end{split}
\end{equation}
for each $a\in F$. Thus with respect to the net $(\rho_{(F,\Lambda,\Gamma,\epsilon)})_{(F,\Lambda,\Gamma,\epsilon)},$ $A_{e}$ becomes approximately biprojective.

	\end{proof}
A Banach algebra $A$ is called Johnson pseudo-contractible,  if
there exists a not necessarily bounded net $(m_{\alpha})$ in
$(A\otimes_{p}A)^{**}$ such that $a\cdot m_{\alpha}=m_{\alpha}\cdot
a$ and $\pi^{**}_{A}(m_{\alpha})a-a\rightarrow 0,$ for every $a\in
A,$ see \cite{sah new amen1} and \cite{sah1}. 

A Banach algebra $A$ is called biflat,   if  there is a bounded $A-$bimodule morphsim $\rho$ from $A$ into $(A\otimes_{p}A)^{**}$ such that $\pi^{**}_{A}\circ\rho_{\alpha}(a)=a$, for each $a\in A$, see \cite{run}.

\begin{Proposition}
Let $A$ be a Banach algebra and $e\in \overline{B^{0}_{1}}$. Suppose that $A_{e}$ is unital.
Then  $A$ is Johnson pseudo-contractible if and only if  $A_{e}$ is Johnson pseudo-contractible.
\end{Proposition}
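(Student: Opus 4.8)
The plan is to reduce everything to the observation that, once $A_{e}$ is unital, $A$ and $A_{e}$ are topologically isomorphic Banach algebras. By Proposition \ref{exis unit} the hypothesis forces $A$ to be unital and $e$ to be invertible, so I would consider the map $\theta:A_{e}\rightarrow A$ given by $\theta(a)=ea$. A one-line computation gives $\theta(a\ast b)=\theta(aeb)=eaeb=(ea)(eb)=\theta(a)\theta(b)$, while $\theta$ is a linear bijection with inverse $a\mapsto e^{-1}a$, and both $\theta$ and $\theta^{-1}$ are bounded because $\|e\|\le 1$. Hence $\theta$ is a topological isomorphism of Banach algebras, and the proposition follows once we know that Johnson pseudo-contractibility is preserved under such isomorphisms.

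To establish that invariance, let $\eta:B\rightarrow C$ be a topological algebra isomorphism and $(m_{\alpha})\subseteq (B\otimes_{p}B)^{**}$ a net with $b\cdot m_{\alpha}=m_{\alpha}\cdot b$ and $\pi_{B}^{**}(m_{\alpha})b-b\rightarrow 0$. Then $\eta\otimes\eta:B\otimes_{p}B\rightarrow C\otimes_{p}C$ is again a topological isomorphism, it intertwines the bimodule actions through $\eta$, and $\pi_{C}\circ(\eta\otimes\eta)=\eta\circ\pi_{B}$; passing to second duals these identities persist. I would put $n_{\alpha}=(\eta\otimes\eta)^{**}(m_{\alpha})$ and check $c\cdot n_{\alpha}=n_{\alpha}\cdot c$ for all $c\in C$ (immediate from the intertwining property and $b\cdot m_{\alpha}=m_{\alpha}\cdot b$) together with $\pi_{C}^{**}(n_{\alpha})c-c=\eta\big(\pi_{B}^{**}(m_{\alpha})\eta^{-1}(c)-\eta^{-1}(c)\big)\rightarrow 0$, where boundedness of $\eta$ converts the convergence for $B$ into convergence for $C$. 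Taking $\eta=\theta$ gives one implication and $\eta=\theta^{-1}$ the other.

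A route closer to the computational style of the earlier sections is to prove only ``$A$ Johnson pseudo-contractible $\Longrightarrow A_{e}$ Johnson pseudo-contractible'': using that $A_{e}$ and $A$ share the same underlying Banach space, so that on $A_{e}\otimes_{p}A_{e}=A\otimes_{p}A$ one has $a\cdot_{\ast}u=(ae)\cdot u$ and $u\cdot_{\ast}a=u\cdot(ea)$, and using the $A_{e}$-bimodule morphism $T(a\otimes b)=ae^{-1}\otimes b$ from the proof of Theorem \ref{Approximate homological} (which satisfies $\pi_{A_{e}}\circ T=\pi_{A}$), the net $\tilde{m}_{\alpha}=T^{**}(e^{-1}\cdot m_{\alpha})$ works: the commutation $a\cdot_{\ast}\tilde{m}_{\alpha}=\tilde{m}_{\alpha}\cdot_{\ast}a$ reduces through the module identities and $e\cdot m_{\alpha}=m_{\alpha}\cdot e$ to $a\cdot m_{\alpha}=m_{\alpha}\cdot a$, and $\pi_{A_{e}}^{**}(\tilde{m}_{\alpha})\ast a-a=e^{-1}\big(\pi_{A}^{**}(m_{\alpha})(ea)-ea\big)\rightarrow 0$. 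The converse is then free: by Proposition \ref{khod}, $A=(A_{e})_{e^{-2}}$, so applying the implication just proved to the algebra $A_{e}$ with the element $e^{-2}$ (and noting $(A_{e})_{e^{-2}}=A$ is unital) shows $A$ is Johnson pseudo-contractible.

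Apart from routine bookkeeping there is no real obstacle here: the one substantive step is spotting the isomorphism $\theta$ (equivalently, the correct formula for $\tilde{m}_{\alpha}$). The only point requiring a little care is the manipulation of module actions on the second duals — verifying that $a\cdot_{\ast}F=(ae)\cdot F$ and $F\ast a=F\cdot(ea)$ hold on $(A\otimes_{p}A)^{**}$ and that $\pi^{**}$ and $T^{**}$ remain bimodule morphisms — but these are standard facts about biduals of Banach bimodules and bring in nothing new.
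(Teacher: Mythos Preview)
Your argument is correct, and it is genuinely different from the paper's. The paper does not use the isomorphism $\theta(a)=ea$ at all; instead it argues through amenability and biflatness, quoting external results at each step: unital $+$ Johnson pseudo-contractible $\Rightarrow$ amenable (\cite{askar}), amenable $\Rightarrow$ biflat (\cite{run}), biflatness transfers between $A$ and $A_{e}$ (\cite{khod}), and biflat $+$ unital $\Rightarrow$ amenable $\Rightarrow$ Johnson pseudo-contractible. Your route is more elementary and entirely self-contained: once $e$ is invertible, $a\mapsto ea$ is a topological algebra isomorphism $A_{e}\to A$ (one remark: the boundedness of $\theta^{-1}$ is not ``because $\|e\|\le 1$'' but simply because $e^{-1}\in A$, or by the open mapping theorem), and Johnson pseudo-contractibility is visibly an isomorphism invariant. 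What the paper's approach buys, implicitly, is the stronger conclusion that under the standing hypothesis both conditions are equivalent to amenability of $A$; what your approach buys is a short, dependency-free proof that works for any isomorphism-invariant property whatsoever. Your second, computational variant via $T^{**}$ and Proposition~\ref{khod} also goes through and indeed matches the style of Theorem~\ref{Approximate homological}, but the isomorphism argument is the cleanest.
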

\begin{proof}
Since $A_{e} $ is unital, by Proposition \ref{exis unit} $A$ is unital.  So using \cite[Theorem 2.1]{askar}, Johnson pseudo-contractibility of $A$ implies that $A$ is amenable. Thus by \cite[Exercise 4.3.15]{run}, $A$ is biflat. Then by \cite[Theorem 2.4]{khod} $A_{e}$ is biflat. Since $A_{e}$ is unital, biflatness of $A_{e}$	 gives the amenability of $A_{e}$.

For converse, suppose that $A_{e}$ is Johnson pseudo-contractible. Since $A_{e}$ is unital by \cite[Theorem 2.1]{askar} $A_{e}$ is amenable, so is biflat. Applying \cite[Theorem 2.4]{khod}  follows that $A$ is biflat. Using Proposition \ref{exis unit}, $A$ is unital, thus by \cite[Exercise 4.3.15]{run} $A$ is amenable. So \cite[Lemma 2.1]{sah new amen1} implies that $A$ is Johnson pseudo-contractible.
	
	\end{proof}
\begin{Example}
Let  $A=\{\left(\begin{array}{ccc} a_{11}&a_{12}&a_{13}\\
	0&a_{22}&a_{23}\\
	0&0&a_{33}
	\end{array}
	\right)| a_{ij}\in \mathbb{C}\}$ and suppose that $e=\left(\begin{array}{ccc} \frac{1}{4}&\frac{1}{4}&0\\
	0&\frac{1}{4}&0\\
	0&0&\frac{1}{4}
	\end{array}
	\right)$. Clearly $e$ is invertible and $A$ is unital. So by Proposition \ref{exis unit} $A_{e}$ is unital. Using \cite[Theorem 2.5]{sah new amen1} we know that $A$ is not Johnson pseudo-contractible. So by previous proposition
	$A_{e}$ is not Johnson pseudo-contractible. 
	\end{Example}
%------------------------------------------------------------------------------------------------------------------------------------------
\begin{small}

\end{small}
\end{document}